\newtheorem*{corollary*}{Corollary}
\newtheorem*{remark*}{Remark}
\newtheorem{definition}{Definition}
\newtheorem{theorem}[definition]{Theorem}
\newtheorem{lemma}[definition]{Lemma}
\newtheorem{corollary}[definition]{Corollary}
\newcommand{\id}{\ensuremath{\mathrm{id}}}
\newcommand{\newo}{\ensuremath{\mathcal{O}}}
\newcommand{\newq}{\ensuremath{\mathbb{Q}}}
\newcommand{\newz}{\ensuremath{\mathbb{Z}}}
\begin{document}

\begin{frontmatter}






\title{A result of Lemmermeyer on class numbers}
\author{Paul Monsky}

\address{Brandeis University, Waltham MA  02454-9110, USA\\  monsky@brandeis.edu }

\begin{abstract}
I present Franz Lemmermeyer's proof that if $p$ is a prime $\equiv 9 \pod{16}$ then the class number of $\newq\left(p^{\frac{1}{4}}\right)$ is $\equiv 2 \pod{4}$.

\end{abstract}
\maketitle

\end{frontmatter}


Let $p$ be a prime $\equiv 1 \pod{4}$. Then the class number of $k=\newq\left(\sqrt{p}\right)$ is odd, and the fundamental unit of $\newo_{k}$ has norm $-1$; this result in essence goes back to Gauss. Years ago I conjectured that if $p\equiv 9\pod{16}$ then the class number of $\newq\left(p^{\frac{1}{4}}\right)$ is $\equiv 2\pod{4}$. (Parry \cite{2} had previously shown that it's even, and that when 2 is not a fourth power in $\newz/p$ it's $\equiv 2\pod{4}$.) I gave a proof of my conjecture assuming that the elliptic curve $y^{2}=x^{3}-px$ has positive rank, as the Birch Swinnerton-Dyer conjecture predicts.

Recently I asked on Mathoverflow whether the elliptic curve assumption could be eliminated. Franz Lemmermeyer responded with an unconditional proof that starts with Gauss' result and continues with two applications of the ambiguous class number formula. His very nice argument deserves wider circulation, so I'm writing it up here.

\begin{theorem}
\label{theorem1}
If $p\equiv 1\pod{8}$, $\newq\left(p^{\frac{1}{4}}\right)$ has even class number.
\end{theorem}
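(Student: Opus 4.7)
Set $k=\newq(\sqrt{p})$, so that $K=k(\sqrt{\sqrt{p}})$ is a quadratic extension with Galois group $G=\{1,\sigma\}$. Gauss supplies the odd class number $h(k)$ and a fundamental unit $\epsilon$ of $\newo_{k}$ with $N_{k/\newq}\epsilon=-1$. I plan to apply Chevalley's ambiguous class number formula
\[
\bigl|\mathrm{Cl}(K)^{G}\bigr| \;=\; \frac{h(k)\,\prod_{v}e(v)}{[K:k]\cdot[E_{k}:E_{k}\cap N_{K/k}K^{\times}]},
\]
where $v$ ranges over places of $k$ ramified in $K$, and arrange the right-hand side to be even; since $\mathrm{Cl}(K)^{G}$ is a subgroup of $\mathrm{Cl}(K)$, this forces $h(K)$ to be even.

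First I would identify the ramified places. The prime $\mathfrak{p}=(\sqrt{p})$ ramifies, since $p^{1/4}$ is a uniformizer above it. Because $p\equiv 1\pod{8}$, the prime $2$ splits in $k$ as $\mathfrak{p}_{1}\mathfrak{p}_{2}$ with both completions $\newq_{2}$; the two images of $\sqrt{p}$ in $\newz_{2}^{\times}$ reduce modulo $8$ to $\{1,7\}$ when $p\equiv 1\pod{16}$ and to $\{3,5\}$ when $p\equiv 9\pod{16}$. In either case one residue lies in $\{1,5\}\pmod{8}$ (making the local extension trivial or unramified) and the other lies in $\{3,7\}\pmod{8}$ (making it ramified); let $\mathfrak{p}'$ denote the prime where the latter happens. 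Of the two real places of $k$, the one where $\sqrt{p}<0$ extends to a complex place of $K$ and so is ramified; call it $v_{\infty}$. Thus $t=3$ and $\prod_{v}e(v)=8$, and the formula collapses to
\[
\bigl|\mathrm{Cl}(K)^{G}\bigr| \;=\; \frac{4\,h(k)}{[E_{k}:E_{k}\cap NK^{\times}]}.
\]

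It then suffices to show $[E_{k}:E_{k}\cap NK^{\times}]\le 2$. By Hasse's norm theorem this follows if $\pm\epsilon$ is a local norm at every place for an appropriate sign. Units are automatic at unramified places. At $\mathfrak{p}$ the relation $N\epsilon=-1$ gives $\bar{\epsilon}^{2}\equiv -1\pmod{p}$; since $p\equiv 1\pod{8}$, every element of $\mathbb{F}_{p}^{\times}$ of order dividing $4$ is a square, so $\epsilon$ (and likewise $-\epsilon$, as $-1$ is a square mod $p$) is a local square and hence a local norm at $\mathfrak{p}$. Only $\mathfrak{p}'$ and $v_{\infty}$ remain. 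Hilbert-symbol reciprocity $\prod_{v}(\epsilon,\sqrt{p})_{v}=1$, combined with the vanishing of all other symbols, gives
\[
(\epsilon,\sqrt{p})_{\mathfrak{p}'}\;=\;(\epsilon,\sqrt{p})_{v_{\infty}}.
\]
Multiplication by $-1$ flips both of these, since $(-1,\sqrt{p})_{v_{\infty}}=-1$ (both negative reals) and $(-1,u)_{2}=-1$ for $u\equiv 3\pod{4}$. Hence exactly one of $\pm\epsilon$ has both symbols equal to $+1$, making $\pm\epsilon$ a local norm at every place and therefore a global norm. Since $\pm\epsilon\notin E_{k}^{2}$, the image of $E_{k}\cap NK^{\times}$ in $E_{k}/E_{k}^{2}\cong(\newz/2\newz)^{2}$ has order at least $2$, giving $[E_{k}:E_{k}\cap NK^{\times}]\le 2$. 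Therefore $|\mathrm{Cl}(K)^{G}|\ge 2h(k)$ is even, and so is $h(K)$.

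The main obstacle I anticipate is the $2$-adic bookkeeping: correctly identifying the ramified prime $\mathfrak{p}'$ and verifying that exactly one of the primes above $2$ ramifies in $K/k$. The Hilbert-symbol reciprocity step, forcing the obstruction at $\mathfrak{p}'$ to coincide with the sign obstruction at $v_{\infty}$, is the key technical device, as this coupling is what allows a single sign choice of $\pm\epsilon$ to eliminate both obstructions simultaneously.
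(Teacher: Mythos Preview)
Your argument is correct, but it follows a genuinely different route from the paper's. The paper proves Theorem~\ref{theorem1} by exhibiting an explicit unramified quadratic extension: if $F$ is the quartic subfield of $\newq(\mu_p)$, then $F\bigl(p^{1/4}\bigr)\supset \newq\bigl(p^{1/4}\bigr)$ is everywhere unramified (the only possibly ramified prime, the one over $p$, cannot ramify because $F\bigl(p^{1/4}\bigr)/k$ is a tame $(\newz/2)^2$-extension and hence has no totally ramified primes). Class field theory then gives $2\mid h$ immediately. Your approach instead applies Chevalley's formula to the quadratic layer $\newq\bigl(p^{1/4}\bigr)/k$, counts $t=3$ ramified places, and uses Hilbert reciprocity to couple the obstructions at the bad $2$-adic prime and the bad real place so that one of $\pm\epsilon$ is a global norm, forcing $j\le 2$ and $\lvert\mathrm{Cl}(K)^{G}\rvert\ge 2h(k)$.

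Both approaches are valid; the trade-offs are these. The paper's construction is shorter, needs no $2$-adic case analysis, and---crucially for the rest of the paper---identifies the unramified extension as $F\bigl(p^{1/4}\bigr)=F(\sqrt{\epsilon})$, which is exactly what Corollaries~\ref{corollary2} and~\ref{corollary5} and Theorem~\ref{theorem10} need to push on to the $p\equiv 9\pmod{16}$ result. Your computation is self-contained (it never mentions $F$), and in fact it already contains ingredients the paper develops later: your observation that $\bar{\epsilon}$ has order~$4$ in $\mathbb{F}_p^{\times}$ is Lemma~\ref{lemma9}, and your $2$-adic ramification analysis is close in spirit to Lemma~\ref{lemma7}. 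The cost is that your argument produces only a lower bound on $\lvert\mathrm{Cl}(K)^{G}\rvert$ rather than an explicit class or an explicit unramified extension, so continuing toward the $\bmod\ 4$ statement would still require bringing in $F$ (or equivalent information) afterwards.
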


\begin{proof}[Proof (Lemmermeyer)]
Let $F$ be the quartic subfield of $\newq(\mu_{p})$. Then $F\supset k=\newq(\sqrt{p})$.  Since $p\equiv 1\pod{8}$, the infinite prime of $\newq$ is unramified in $F$, and the only prime of $\newq$ that ramifies in $F$ is $(p)$.

Since $F\left(p^{\frac{1}{4}}\right)$ is the compositum of $\newq\left(p^{\frac{1}{4}}\right)$ and $F$ it is a Galois extension of $k$ with Galois group $\newz/2\times\newz/2$. Since $p\ne 2$, the ramification is tame, and the prime above $p$ cannot ramify totally in the extension. It follows that $\left(p^{\frac{1}{4}}\right)$ cannot ramify from $\newq\left(p^{\frac{1}{4}}\right)$ to $F\left(p^{\frac{1}{4}}\right)$. So $\newq\left(p^{\frac{1}{4}}\right)$ has an everywhere unramified extension, $F\left(p^{\frac{1}{4}}\right)$, of degree 2, and class-field theory gives the result.
\end{proof}

\begin{corollary}
\label{corollary2}
Suppose $p\equiv 1\pod{8}$ and $F$ is as in Theorem \ref{theorem1}. If $F\left(p^{\frac{1}{4}}\right)$ has odd class number then the class number of $\newq\left(p^{\frac{1}{4}}\right)$ is $\equiv 2\pod{4}$.
\end{corollary}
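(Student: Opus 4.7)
The plan is to identify $L = F\bigl(p^{\frac{1}{4}}\bigr)$ with the 2-Hilbert class field of $K = \newq\bigl(p^{\frac{1}{4}}\bigr)$; combined with Theorem~\ref{theorem1}, this pins down the 2-Sylow subgroup of the class group of $K$ as cyclic of order exactly $2$, giving $h_K \equiv 2 \pod{4}$.

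Let $H_K^{(2)}$ denote the maximal abelian unramified 2-extension of $K$. By class field theory, $\bigl[H_K^{(2)} : K\bigr]$ equals the order of the 2-Sylow subgroup of the ideal class group of $K$. The proof of Theorem~\ref{theorem1} has already established the inclusion $L \subseteq H_K^{(2)}$, since $L/K$ was exhibited there as abelian, everywhere unramified, and of degree $2$.

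For the reverse inclusion I would examine $H_K^{(2)}/L$. Because $H_K^{(2)}/K$ is abelian, this intermediate extension is abelian too; it remains unramified by transitivity; and its degree divides $\bigl[H_K^{(2)} : K\bigr]$, so is a power of $2$. Hence $H_K^{(2)}$ is an abelian unramified 2-extension of $L$, and therefore lies in the maximal such extension of $L$. Since $L$ has odd class number by hypothesis, the latter is $L$ itself, forcing $H_K^{(2)} = L$. The 2-Sylow of the class group of $K$ thus has order exactly $[L : K] = 2$, and combined with Theorem~\ref{theorem1} this yields $h_K \equiv 2 \pod{4}$. I do not anticipate a real obstacle here: the argument is essentially formal once one has Theorem~\ref{theorem1} alongside the class field theoretic translation of ``$L$ has odd class number'' into ``$L$ has no nontrivial abelian unramified 2-extension.''
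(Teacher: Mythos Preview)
Your argument is correct and is essentially the paper's argument, just packaged a bit differently. The paper argues by contradiction: if $4\mid h_K$ then $K$ has an unramified abelian extension $M$ of degree $4$, and the compositum $MF$ is then a nontrivial unramified (abelian $2$-) extension of $F\bigl(p^{1/4}\bigr)$, contradicting the odd class number hypothesis. Your direct identification $H_K^{(2)}=L$ is the same idea stated without the contradiction, and is arguably cleaner since it makes transparent that the $2$-part of the class group has order exactly $2$.
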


\begin{proof}
Suppose on the contrary that 4 divides the class number. Then $\newq\left(p^{\frac{1}{4}}\right)$ admits an unramified abelian extension of degree 4. Translating by $F$ we get a degree 2 unramified extension of $F\left(p^{\frac{1}{4}}\right)$, contradicting the odd class number assumption.
\end{proof}

\begin{lemma}
\label{lemma3}
The $F$ of Theorem \ref{theorem1} is the unique degree 2 extension of $k$ unramified outside of $(\sqrt{p})$.
\end{lemma}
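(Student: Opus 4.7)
The plan is to combine Kummer theory for quadratic extensions of $k$ with the two classical facts recalled at the start of the paper: the class number $h_{k}$ is odd, and the fundamental unit $\epsilon\in\newo_{k}^{*}$ has norm $-1$. Since $p\equiv 1\pod{8}$ forces complex conjugation into the index-$4$ subgroup of $(\newz/p)^{*}$ fixing $F$, the field $F$ is totally real, so I interpret ``unramified outside $(\sqrt{p})$'' to mean unramified at every other place, the two real places of $k$ included.

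Every degree $2$ extension of $k$ is of the form $k(\sqrt{\beta})$ for some class $\beta\in k^{*}/(k^{*})^{2}$. Unramification at each finite $\mathfrak{p}\ne(\sqrt{p})$ forces $v_{\mathfrak{p}}(\beta)$ to be even, so $(\beta)=\mathfrak{a}^{2}(\sqrt{p})^{e}$ with $e\in\{0,1\}$. Oddness of $h_{k}$ makes $\mathfrak{a}$ principal, and absorbing its generator into a square one may take $\beta=u(\sqrt{p})^{e}$ with $u\in\newo_{k}^{*}$. Since $\newo_{k}^{*}/(\newo_{k}^{*})^{2}$ is represented by $\{1,-1,\epsilon,-\epsilon\}$, at most $8$ classes of $\beta$ remain to consider.

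Imposing unramification at the two real places of $k$ amounts to requiring $\beta$ totally positive, and this cuts the list drastically. Because $N(\epsilon)=-1$, the two real conjugates of $\epsilon$ have opposite signs, and evidently so do those of $\sqrt{p}$. A short sign count then shows that $1$ is the only totally positive representative in $\{\pm 1,\pm\epsilon\}$, and that exactly one element of $\{\pm\sqrt{p},\pm\epsilon\sqrt{p}\}$ is totally positive---namely $\epsilon\sqrt{p}$, once the sign of $\epsilon$ has been chosen so that $\sigma_{1}(\epsilon)>0$. Thus at most one nontrivial class of $\beta$ survives, and since Theorem~\ref{theorem1} furnishes $F$ as one such extension, $F$ is the unique one.

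The main obstacle is the initial bookkeeping, which leans on both Gauss inputs in an essential way: without oddness of $h_{k}$ extra ideal classes would survive, and without $N(\epsilon)=-1$ more units would count as totally positive. I deliberately skip verifying the local condition at the primes above $2$, since the existence of $F$ certifies a posteriori that $\beta=\epsilon\sqrt{p}$ passes that condition; otherwise no nontrivial extension at all would exist, contradicting Theorem~\ref{theorem1}.
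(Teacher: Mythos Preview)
Your argument is correct, and it takes a genuinely different route from the paper's. The paper argues by contradiction: if $F'$ were a second such extension, then in the biquadratic compositum $FF'/k$ each of the three intermediate quadratic fields would have to be ramified at $(\sqrt{p})$ (using only that $h_{k}$ is odd, so $k$ admits no everywhere-unramified quadratic extension); hence $(\sqrt{p})$ would be totally ramified in $FF'/k$, contradicting the fact that tame inertia is cyclic. Your approach instead enumerates the candidates via Kummer theory, spending both Gauss inputs---odd $h_{k}$ to trivialise the ideal $\mathfrak{a}$, and $N(\epsilon)=-1$ for the sign count at the two real places---to cut the list to a single nontrivial class, and then cites the existence of $F$ from Theorem~\ref{theorem1} to sidestep the $2$-adic verification. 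The paper's proof is shorter and uses strictly less (it never invokes $N(\epsilon)=-1$), while yours is more constructive: it identifies $F=k(\sqrt{\epsilon\sqrt{p}})$ as a by-product, which is precisely what the paper establishes separately in Theorem~\ref{theorem4}.
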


\begin{proof}
Let $F^{\prime}$ be a second such extension. Since the class number of $k$ is odd, $k$ has no unramified extensions of degree 2 and $(\sqrt{p})$ must ramify in $F^{\prime}$. There is a third quadratic extension, $F^{\prime\prime}$, of $k$ contained in $FF^{\prime}$ and the same argument shows that $(\sqrt{p})$ ramifies in $F^{\prime\prime}$. So $(\sqrt{p})$ ramifies totally in $FF^{\prime}$, contradicting tameness.
\end{proof}

\begin{theorem}
\label{theorem4}
If $p\equiv 1\pod{8}$ and $\epsilon > 0$ is a unit of norm $-1$ in the ring of integers of $k$, then $F=k\left(\sqrt{\epsilon\sqrt{p}}\right)$.
\end{theorem}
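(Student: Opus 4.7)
The plan is to write $F = k(\sqrt{d})$ via Kummer theory and pin down $d$ modulo $(k^\times)^2$ using two constraints drawn from the preceding results. The first is ramification-theoretic: by the proof of Theorem \ref{theorem1}, $F/k$ is unramified outside $(\sqrt{p})$. The second is signature-theoretic: $F$ is totally real, because $4 \mid (p-1)/2$ puts $F$ inside the maximal real subfield $\newq(\zeta_p + \zeta_p^{-1})$ of $\newq(\mu_p)$. Since by Lemma \ref{lemma3} such an $F$ is uniquely determined by those two properties, it suffices to exhibit $\epsilon\sqrt{p}$ as the only square class on the nose that could realize them.

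First I would use ramification to reduce $d$ to the form $u\sqrt{p}$ modulo squares, with $u$ a unit. At each prime $\mathfrak{q}$ of $k$ with $\mathfrak{q} \neq (\sqrt{p})$, unramifiedness of $k(\sqrt{d})/k$ forces $v_\mathfrak{q}(d) \equiv 0 \pmod 2$. Moreover $(\sqrt{p})$ must itself ramify in $F/k$: otherwise $F/k$ would be everywhere unramified (including at archimedean primes, because $F$ is totally real), contradicting the oddness of the narrow class number of $k$, which equals the ordinary class number since $\epsilon$ has norm $-1$. Therefore $(d) = (\sqrt{p}) \cdot I^2$ for some fractional ideal $I$; and the odd class number hypothesis, applied to $[I]^2 = 1$, promotes $I$ to a principal ideal $(\beta)$, giving $d \equiv u\sqrt{p} \pmod{(k^\times)^2}$ for some $u \in \newo_k^\times$.

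Next I would perform the sign analysis. Represent $\newo_k^\times / (\newo_k^\times)^2$ by $\{1, -1, \epsilon, -\epsilon\}$. At the two real embeddings $\sigma_1, \sigma_2$ of $k$ with $\sigma_i(\sqrt{p}) = \pm\sqrt{p}$, the four candidates $\pm\sqrt{p}, \pm\epsilon\sqrt{p}$ have sign patterns $(+,-)$, $(-,+)$, $(+,+)$, $(-,-)$ respectively; the key input is $N(\epsilon) = \epsilon\,\sigma_2(\epsilon) = -1$ together with $\epsilon > 0$, which force $\sigma_2(\epsilon) < 0$. Only $\epsilon\sqrt{p}$ is totally positive, so the fact that $F$ is totally real forces $d \equiv \epsilon\sqrt{p}$ modulo squares, yielding $F = k(\sqrt{\epsilon\sqrt{p}})$.

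The main obstacle I anticipate is the bookkeeping around primes above $2$. At each $\mathfrak{p}_2 \mid 2$ the extension must be unramified, which imposes a $2$-adic condition on $d$ in $k_{\mathfrak{p}_2}^\times/(k_{\mathfrak{p}_2}^\times)^2$ beyond mere parity of valuation. The plan sidesteps a direct verification for $\epsilon\sqrt{p}$ by pinning $d$ down via the uniqueness coming from the other constraints; the $2$-adic condition is then satisfied automatically, simply because $F$ exists and has the required properties.
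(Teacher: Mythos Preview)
Your argument is correct and proceeds in the opposite direction from the paper. The paper verifies directly that $k(\sqrt{\epsilon\sqrt{p}})/k$ is unramified outside $(\sqrt{p})$ --- including an explicit $2$-adic check: writing $\epsilon = a + b\sqrt{p}$ with $a,b\in\newz$, it shows $4\mid a$ and $b\equiv 1\pmod 4$, whence $\epsilon\sqrt{p}\equiv 1\pmod 4$ in each completion at a prime above $2$ --- and then invokes Lemma~\ref{lemma3} to identify this field with $F$. You instead start from $F$, write $F = k(\sqrt{d})$ by Kummer theory, and let the valuation and signature constraints force $d$ into the square class of $\epsilon\sqrt{p}$; this neatly sidesteps any $2$-adic verification, since unramifiedness above $2$ is inherited from $F$ rather than checked for $\epsilon\sqrt{p}$. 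Your route is cleaner for Theorem~\ref{theorem4} in isolation, but note that the paper's explicit congruences $4\mid a$, $b\equiv 1\pmod 4$ are reused verbatim in the proof of Lemma~\ref{lemma7}, so in the paper's flow the direct computation pays for itself. A minor remark: your invocation of Lemma~\ref{lemma3} in the plan is not actually needed, since you pin down $d$ directly rather than by appeal to uniqueness.
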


\begin{proof}
$\epsilon\sqrt{p}$ and its $\newq$-conjugate $\epsilon^{-1}\sqrt{p}$ are both $>0$. So neither of the infinite primes of $k$ ramify in $k\left(\sqrt{\epsilon\sqrt{p}}\right)$. If $r^{2}-sp^{2}=-4$, $r$ and $s$ cannot both be odd. It follows that $\epsilon = a+b\sqrt{p}$ with $a$ and $b$ integers. Also, $0<\epsilon + \epsilon^{-1}=2b\sqrt{p}$, and $b>0$. Now $a^{2}-pb^{2}=-1$. Since $pb^{2}\equiv 1\pod{8}$, 8 divides $a^{2}$ and 4 divides $a$. Furthermore every prime that divides $b$ divides $a^{2}+1$, and so is $\equiv 1\pod{4}$. Since $b>0$, $b\equiv 1\pod{4}$. Let $P$ be a prime of $\newo_{k}$ lying over $(2)$. Then in the $P$-completion of $\newo_{k}$, $\epsilon\sqrt{p}=bp+a\sqrt{p}\equiv 1\pod{4}$. So $P$ does not ramify in $k\left(\sqrt{\epsilon\sqrt{p}}\right)$. It follows that the only prime that can ramify in $k\left(\sqrt{\epsilon\sqrt{p}}\right)$ is $(\sqrt{p})$, and we apply Lemma \ref{lemma3}.
\end{proof}

\begin{corollary}
\label{corollary5}
$F\left(p^{\frac{1}{4}}\right)=F(\sqrt{\epsilon})$.
\end{corollary}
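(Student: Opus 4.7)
The plan is to exploit Theorem \ref{theorem4}, which tells us that $F$ contains the element $\alpha := \sqrt{\epsilon\sqrt{p}}$. The corollary then reduces to a one-line manipulation of square roots inside the compositum.

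First I would work inside $F\left(p^{\frac{1}{4}}\right)$. Since $\alpha \in F$ and $p^{\frac{1}{4}} \in F\left(p^{\frac{1}{4}}\right)$, the element $\beta := \alpha/p^{\frac{1}{4}}$ lies in $F\left(p^{\frac{1}{4}}\right)$. A direct calculation gives $\beta^{2} = \epsilon\sqrt{p}/\sqrt{p} = \epsilon$, so $\beta$ is a square root of $\epsilon$ and hence $F(\sqrt{\epsilon}) \subseteq F\left(p^{\frac{1}{4}}\right)$.

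Conversely, working inside $F(\sqrt{\epsilon})$, the element $\alpha/\sqrt{\epsilon}$ squares to $\epsilon\sqrt{p}/\epsilon = \sqrt{p}$, so it is a fourth root of $p$ lying in $F(\sqrt{\epsilon})$. This gives the reverse inclusion $F\left(p^{\frac{1}{4}}\right) \subseteq F(\sqrt{\epsilon})$, and the two fields coincide.

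There is no real obstacle here; the only thing worth noting is that we are free to choose $p^{\frac{1}{4}}$ (respectively $\sqrt{\epsilon}$) so that the quotient above is literally equal to $\sqrt{\epsilon}$ (respectively $p^{\frac{1}{4}}$), since each of $\epsilon$ and $\sqrt{p}$ has exactly two square roots and the identity only needs to hold up to sign, which does not affect the generated field.
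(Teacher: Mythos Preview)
Your argument is correct and follows the same idea as the paper's: both use Theorem~\ref{theorem4} to place $\sqrt{\epsilon\sqrt{p}}$ inside $F$ and then divide out by the appropriate square root. The only cosmetic difference is that the paper observes that both $F\!\left(p^{1/4}\right)$ and $F(\sqrt{\epsilon})$ are degree~$2$ over $F$, so a single inclusion suffices, whereas you verify both inclusions explicitly.
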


\begin{proof}
Both fields are degree 2 extensions of $F$. Since $\sqrt{\epsilon\sqrt{p}}$ is in $F$, $\sqrt{\epsilon}$ is in $F\left(p^{\frac{1}{4}}\right)$.
\end{proof}

We now give the idea of Lemmermeyer's proof. The class number of $k$ is known to be odd. Lemmermeyer uses the ambiguous class number formula to deduce that $k(\sqrt{\epsilon})$ has odd class number. Then assuming $p\equiv 9\pod{16}$ he uses it once more to show that $F(\sqrt{\epsilon})$ has odd class number. Corollaries \ref{corollary5} and \ref{corollary2} complete the proof.

We introduce some notation. Suppose $L\supset K$ is a degree 2 extension of number fields with Galois group $G =\{\id, \sigma\}$. $U_{K}$ consists of the units of $\newo_{K}$ while $h_{L}$ and $h_{K}$ are the class numbers of $L$ and $K$. $C_{L}$ is the class group of $L$, while $C_{L}^{G}$, the ambiguous class group, consists of the elements of $C_{L}$ fixed by $\sigma$. The  following result is contained in Theorem 4.1 of \cite{1}.

\begin{theorem}
\label{theorem6}
$|C_{L}^{G}|=h_{K}\cdot(2^{t-1}/j)$ where $t$ is the number of primes of $K$, finite or infinite, that ramify in $L$, while $j$ is the index in $U_{K}$ of the subgroup consisting of elements that are norms from $L$. (Since this subgroup contains $U_{K}^{2}$, $j$ is a power of 2.)  Furthermore, if $|C_{L}^{G}|$ is odd, $h_{L}$ is odd.
\end{theorem}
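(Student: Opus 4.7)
The plan is to use Chevalley's classical cohomological approach to the ambiguous class number formula. Since $G=\{\id,\sigma\}$ is cyclic of order $2$, its cohomology is $2$-periodic, and this is what produces the unit-index $j$. The basic input is the pair of exact sequences of $G$-modules
\begin{equation*}
1\to U_L\to L^{\times}\to P_L\to 1,\qquad 1\to P_L\to I_L\to C_L\to 1,
\end{equation*}
where $P_L$ and $I_L$ are the principal and all nonzero fractional ideals of $L$.

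I would first take $G$-invariants of the second sequence. The module $I_L$ decomposes as a direct sum over primes $\mathfrak{p}$ of $K$ of the induced modules $\newz[G/G_\mathfrak{p}]$, so Shapiro's lemma gives $H^1(G,I_L)=0$, and the long exact sequence becomes
\begin{equation*}
0\to P_L^G\to I_L^G\to C_L^G\to H^1(G,P_L)\to 0.
\end{equation*}
Hence $|C_L^G|=[I_L^G:P_L^G]\cdot|H^1(G,P_L)|$. The second factor I would identify by applying the first sequence: Hilbert 90 yields $H^1(G,L^{\times})=0$, so the connecting map embeds $H^1(G,P_L)$ into $H^2(G,U_L)$, and $2$-periodicity identifies $H^2(G,U_L)$ with $H^0(G,U_L)=U_K/N_{L/K}U_L$, which has order $j$.

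For the first factor I would use the explicit description of ambiguous ideals. The group $I_L^G$ is generated by the extensions $\mathfrak{p}\newo_L$ together with the ramified primes $\mathfrak{P}$ of $L$, and the only relations are $\mathfrak{P}^2=\mathfrak{p}\newo_L$; this gives $[I_L^G:I_K\newo_L]=2^{t_f}$, where $t_f$ is the number of finite ramified primes. Dividing by principal ideals and using $I_K/P_K=C_K$ introduces the factor $h_K$, while identifying which principal ideals of $L$ are actually ambiguous brings in the infinite ramified places via the sign map $U_K\to\prod_{v\mid\infty}\{\pm 1\}$, assembling together to give $h_K\cdot 2^{t-1}/j$ as claimed.

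The main obstacle is precisely this last bookkeeping: $h_K$, $2^{t-1}$ and $j^{-1}$ are entangled in the interlocking groups $I_L^G$, $P_L^G$, $H^1(G,P_L)$ and $U_L$, and separating them cleanly requires care to avoid double-counting; the cleanest organizing tool is the Herbrand quotient of $U_L$. Finally, the odd-order assertion is easy: $G$ acts on the $2$-Sylow subgroup $S$ of $C_L$ with orbits of size $1$ or $2$, so $|S|\equiv|S^G|\pmod{2}$. A nontrivial $2$-group has even order, so $S\neq 1$ would force $|S^G|$ to be even, contradicting odd $|C_L^G|$; hence $h_L$ is odd.
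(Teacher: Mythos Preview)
The paper does not prove this theorem at all; it simply quotes it from Lang's \emph{Cyclotomic Fields II} (reference \cite{1}). So there is no proof in the paper to compare against, and your outline already goes well beyond what the paper offers.

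Your sketch is the standard Chevalley cohomological argument and is sound as a plan, including the final parity argument for ``odd $|C_L^G|$ implies odd $h_L$''. One passage should be tightened, however. You write that $H^1(G,P_L)$ embeds in $H^2(G,U_L)\cong U_K/N_{L/K}U_L$ and that this last group has order $j$. Neither half is quite right as stated. The embedding need not be onto---its image is only the kernel of $H^2(G,U_L)\to H^2(G,L^\times)$---and the index $j$ in the theorem is $[U_K:U_K\cap N_{L/K}L^\times]$, not $[U_K:N_{L/K}U_L]$: a unit of $K$ can be a global norm from $L^\times$ without being the norm of a \emph{unit} of $L$. These two discrepancies are precisely what the Herbrand-quotient computation you invoke at the end is designed to reconcile (together with the product formula/local-norm description that brings in the infinite places), so your instinct to organize the bookkeeping that way is correct; but the preceding paragraph oversells how much has actually been pinned down at that stage.
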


\begin{lemma}
\label{lemma7}
Just two primes of $k$ ramify in $k(\sqrt{\epsilon})$.
\end{lemma}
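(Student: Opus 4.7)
The plan is to enumerate all primes of $k$ that could ramify in $k(\sqrt{\epsilon})/k$ and verify the count by a direct local analysis. Since $\epsilon$ is a unit, the discriminant of the generating polynomial $x^{2}-\epsilon$ is $4\epsilon$, a unit times $4$. So no finite prime of odd residue characteristic ramifies; in particular $(\sqrt{p})$ is unramified. This leaves only the infinite places and the primes of $k$ above $2$ as candidates.

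For the infinite places, $k$ has two real embeddings. From $N(\epsilon)=-1$ and $\epsilon>0$ we get $\sigma(\epsilon)=-\epsilon^{-1}<0$, so $\epsilon$ is positive at one embedding and negative at the other. Thus exactly one real infinite place ramifies.

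The substantive step is the dyadic one. Since $p\equiv 1\pod{8}$, the prime $(2)=P\bar{P}$ splits in $k$, and each completion $k_{P}\cong k_{\bar{P}}\cong\newq_{2}$. The two embeddings $k\hookrightarrow\newq_{2}$ send $\sqrt{p}$ to the two $2$-adic square roots of $p$, which differ only in sign and therefore lie in opposite residue classes modulo $4$. Using the congruences $4\mid a$ and $b\equiv 1\pod{4}$ already proved inside Theorem \ref{theorem4}, the image of $\epsilon=a+b\sqrt{p}$ is $\equiv 1\pod{4}$ at one completion and $\equiv 3\pod{4}$ at the other. Appealing to the standard local fact that, for $u\in\newz_{2}^{\times}$, the extension $\newq_{2}(\sqrt{u})/\newq_{2}$ is unramified iff $u\equiv 1\pod{4}$, I conclude that exactly one of $P,\bar{P}$ ramifies in $k(\sqrt{\epsilon})$.

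Summing the three contributions gives $0+1+1=2$ ramified primes of $k$, as claimed. The only point requiring any real care is the $2$-adic calculation distinguishing the two dyadic completions via the mod-$4$ residue of $\sqrt{p}$; the other ingredients are immediate consequences of information already assembled in the preceding results.
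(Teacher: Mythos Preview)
Your proof is correct and follows essentially the same approach as the paper: one real place ramifies because $\epsilon>0$ while $\sigma(\epsilon)<0$; exactly one dyadic prime ramifies because $\epsilon\equiv 1\pmod 4$ at one completion and $\epsilon\equiv 3\pmod 4$ at the other (using $4\mid a$, $b\equiv 1\pmod 4$); and no odd prime ramifies since $\epsilon$ is a unit. The only cosmetic difference is that the paper names the two dyadic primes explicitly as $\bigl(2,\frac{1\mp\sqrt{p}}{2}\bigr)$, whereas you distinguish them by the sign of the $2$-adic square root of $p$; your phrasing is arguably the cleaner way to see why the two residues of $\epsilon$ mod $4$ differ.
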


\begin{proof}
$\epsilon > 0$, and the $\newq$-conjugate $-\epsilon^{-1}$ of $\epsilon$ is $<0$. So one of the two infinite primes ramifies. Also $\epsilon = a+b\sqrt{p}$ with $a\equiv 0\pod{4}$, $b\equiv 1\pod{4}$. So $\epsilon\equiv 1\pod{4}$ in the completion of $\newo_{k}$ at $\left(2,\frac{1-\sqrt{p}}{2}\right)$, and $\epsilon\equiv -1\pod{4}$ in the completion of $\newo_{k}$ at $\left(2,\frac{1+\sqrt{p}}{2}\right)$.  Finally no other primes can ramify.
\end{proof}

\begin{theorem}
\label{theorem8}
$k(\sqrt{\epsilon})$ has odd class number.
\end{theorem}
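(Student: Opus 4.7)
The plan is to apply the ambiguous class number formula (Theorem \ref{theorem6}) to the extension $L=k(\sqrt{\epsilon})$ over $K=k$. Since $h_{k}$ is odd and $t=2$ by Lemma \ref{lemma7}, the formula reads $|C_{L}^{G}|=h_{k}\cdot(2/j)$. It therefore suffices to prove $j=2$; the final sentence of Theorem \ref{theorem6} then gives that $h_{L}$ is odd.

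To compute $j$, recall that $U_{k}$ is generated by $-1$ and $\epsilon$, so $U_{k}/U_{k}^{2}$ has order $4$ with representatives $\{1,-1,\epsilon,-\epsilon\}$. Because $U_{k}^{2}$ already consists of norms, $j$ equals the index in $U_{k}/U_{k}^{2}$ of the image of $U_{k}\cap N_{L/k}(L^{*})$. Now $N_{L/k}(\sqrt{\epsilon})=\sqrt{\epsilon}\cdot(-\sqrt{\epsilon})=-\epsilon$, so $-\epsilon$ is a norm, giving $j\le 2$. The heart of the argument is to exhibit one further obstruction, and I would show that $-1$ is not a norm. Let $v$ be the real place of $k$ at which $\epsilon$ is negative; by the proof of Lemma \ref{lemma7}, $v$ is precisely the infinite place of $k$ that ramifies in $L$, so locally $L\otimes_{k}k_{v}=\newc$. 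The local norm $\newc^{*}\to\mathbb{R}^{*}$ has image $\mathbb{R}_{>0}$, so $-1$ is not a local norm at $v$, and consequently cannot be a global norm from $L^{*}$.

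Thus the image of $U_{k}\cap N_{L/k}(L^{*})$ in $U_{k}/U_{k}^{2}$ equals $\{1,-\epsilon\}$, an index-$2$ subgroup, and $j=2$. This gives $|C_{L}^{G}|=h_{k}$, which is odd, and Theorem \ref{theorem6} completes the proof. The main conceptual step is identifying the correct local obstruction to $-1$ being a norm (the archimedean sign at the ramified real place); the rest is bookkeeping with the ambiguous class number formula.
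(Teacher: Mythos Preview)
Your proof is correct and follows the same route as the paper: apply Theorem~\ref{theorem6} with $t=2$ from Lemma~\ref{lemma7}, and rule out $j=1$ by observing that $-1$ is not a local norm at the ramified real place. The paper stops there, since $|C_L^G|$ being an integer already forces $j\mid 2^{t-1}=2$, so $j>1$ gives $j=2$; your verification that $-\epsilon=N_{L/k}(\sqrt{\epsilon})$ is a norm is correct but not needed.
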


\begin{proof}
Theorem \ref{theorem6} and Lemma \ref{lemma7} show that the ambiguous class number for the extension $k(\sqrt{\epsilon})\supset k$ is $\frac{2h_{k}}{j}$. So it suffices to show that $j>1$. Now $-1$ is in $U_{k}$. But as we saw above there is an infinite prime of $k$ ramifying in $k(\sqrt{\epsilon})$, and $-1$ evidently is not a local norm at that prime.
\end{proof}

\begin{lemma}
\label{lemma9}
$\epsilon$ represents a primitive fourth root of unity in $\newo_{k}/(\sqrt{p})=\newz/p$. Furthermore the prime $(\sqrt{p})$ of $k$ splits in $k(\sqrt{\epsilon})$.
\end{lemma}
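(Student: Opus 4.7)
The plan is to exploit the explicit description $\epsilon = a + b\sqrt{p}$ with $a^{2}-pb^{2}=-1$ recorded in the proof of Theorem \ref{theorem4}. Reducing modulo $(\sqrt{p})$ gives $\epsilon\equiv a$ in $\newo_{k}/(\sqrt{p})=\newz/p$, while $a^{2}\equiv-1\pod{p}$. So $a$ has multiplicative order $4$ in $(\newz/p)^{\times}$, which is the first assertion.

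For the splitting of $(\sqrt{p})$ in $k(\sqrt{\epsilon})$, the approach is to show that $\epsilon$ is a square in the $(\sqrt{p})$-adic completion of $k$. Since $p$ is odd and $\epsilon$ is a unit at $(\sqrt{p})$, Hensel's lemma reduces this to showing that the image $a$ of $\epsilon$ is a square in the residue field $\newz/p$. By the previous step $a$ is a primitive fourth root of unity in $\newz/p$, so I would invoke the hypothesis $p\equiv 1\pod{8}$: this guarantees a primitive eighth root of unity $\zeta$ in $(\newz/p)^{\times}$, and $\zeta^{2}$ is a primitive fourth root of unity, hence equals $\pm a$. So $a$ is a square (both fourth roots of unity, being $\pm\zeta^{2}$, are squares since $-1=\zeta^{4}$ is also a fourth power).

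Once $\epsilon$ is a square in the completion, $k(\sqrt{\epsilon})\otimes_{k}k_{(\sqrt{p})}$ splits as a product of two copies of $k_{(\sqrt{p})}$, which is precisely what it means for $(\sqrt{p})$ to split in $k(\sqrt{\epsilon})/k$.

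The argument is essentially direct; the only subtle point is the need for $p\equiv 1\pod{8}$ rather than merely $p\equiv 1\pod{4}$, so that the fourth roots of unity mod $p$ are themselves squares. No serious obstacle is anticipated.
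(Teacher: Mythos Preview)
Your argument is correct and follows essentially the same route as the paper: reduce $\epsilon=a+b\sqrt{p}$ modulo $(\sqrt{p})$ to get $\epsilon\equiv a$ with $a^{2}\equiv-1$, then use $p\equiv 1\pod{8}$ to see that every fourth root of unity in $(\newz/p)^{*}$ is a square, whence $(\sqrt{p})$ splits. The paper compresses your Hensel/tensor-product step into the single phrase ``the second result follows,'' but the underlying reasoning is the same.
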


\begin{proof}
$\epsilon=a+b\sqrt{p}$ with $a^{2}-pb^{2}=-1$. So mod $\sqrt{p}$, $\epsilon^{2}\equiv a^{2}\equiv -1$, giving the first result. Since $p\equiv 1\pod{8}$, any fourth root of unity in $(\newz/p)^{*}$ is a square, and the second result follows.
\end{proof}

\begin{theorem}[Lemmermeyer]
\label{theorem10}
Suppose $p\equiv 9\pod{16}$. Then the ambiguous class number for the extension $F(\sqrt{\epsilon})\supset k(\sqrt{\epsilon})$ is odd. So $F(\sqrt{\epsilon})$ has odd class number, and Corollaries \ref{corollary5} and \ref{corollary2} show that the class number of $\newq\left(p^{\frac{1}{4}}\right)$ is $\equiv 2\pod{4}$.
\end{theorem}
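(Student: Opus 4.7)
The plan is to apply the ambiguous class number formula (Theorem~\ref{theorem6}) to the quadratic extension $L = F(\sqrt{\epsilon})$ of $K = k(\sqrt{\epsilon})$. Since $h_{K}$ is odd by Theorem~\ref{theorem8}, the formula $|C_{L}^{G}| = h_{K}\cdot 2^{t-1}/j$ will give an odd value precisely when $j = 2^{t-1}$, after which oddness of $h_{L}$ follows from the last sentence of Theorem~\ref{theorem6}.

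First I would determine $t$, the number of primes of $K$ ramifying in $L$. The infinite places contribute nothing: the two real places of $K$ both lie above the embedding of $k$ in which $\epsilon > 0$, so $\epsilon\sqrt{p} > 0$ and $L = K(\sqrt{\epsilon\sqrt{p}})$ stays real at each of them, while the complex place of $K$ cannot ramify. The primes above $2$ also contribute nothing, since $F/\newq$ is ramified only at $p$, so $L = F\cdot K$ is unramified over $K$ above $2$. At $(\sqrt{p})$, Lemma~\ref{lemma9} says it splits in $K/k$ as $\mathfrak{P}_{1}\mathfrak{P}_{2}$, and since $(\sqrt{p})$ ramifies in $F/k$ (Lemma~\ref{lemma3} together with the oddness of $h_{k}$), both $\mathfrak{P}_{1}$ and $\mathfrak{P}_{2}$ ramify in $L/K$. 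Hence $t=2$ and the target becomes $j=2$.

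The upper bound $j \le 2$ I would get from the Hasse norm theorem for the cyclic extension $L/K$: a unit of $K$ is a global norm iff it is a local norm at every place. Only the ramified primes $\mathfrak{P}_{1},\mathfrak{P}_{2}$ can obstruct, and the product formula for local norm residue symbols forces the image of $K^{*}$ in $(\newz/2)^{2}$ to lie in the codimension-one subspace on which the two coordinates sum to zero, giving $j \le 2^{t-1} = 2$.

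The heart of the proof, and where the hypothesis $p\equiv 9\pod{16}$ enters, is the lower bound $j\ge 2$. I would show that $\sqrt{\epsilon}\in U_{K}$ fails to be a local norm at $\mathfrak{P}_{1}$. Locally, $L_{w}/K_{\mathfrak{P}_{1}}$ is the tamely ramified quadratic extension obtained by adjoining the square root of a uniformizer, since $\sqrt{p}$ is a uniformizer of $K_{\mathfrak{P}_{1}}$ and $\sqrt{\epsilon\sqrt{p}}$ differs from $\sqrt{\sqrt{p}}$ by the unit $\sqrt{\epsilon}\in K_{\mathfrak{P}_{1}}^{*}$. Because the residue characteristic $p$ is odd, the tame norm residue symbol asserts that a unit of $K_{\mathfrak{P}_{1}}$ is a local norm iff its reduction in the residue field $\newz/p$ is a square. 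By Lemma~\ref{lemma9}, $\epsilon$ reduces mod $\mathfrak{P}_{1}$ to a primitive fourth root of unity in $\newz/p$, so $\sqrt{\epsilon}$ reduces to a primitive eighth root of unity $\zeta\in(\newz/p)^{*}$. Such a $\zeta$ is a square in $(\newz/p)^{*}$ iff $16\mid p-1$, and under the hypothesis $p\equiv 9\pod{16}$ it is not. Hence $\sqrt{\epsilon}$ is not a local norm at $\mathfrak{P}_{1}$, so $j\ge 2$. Combining the two bounds yields $j=2$, $|C_{L}^{G}| = h_{K}$ is odd, and so is $h_{L}$. The main difficulty is pinpointing this particular unit whose norm residue at $\mathfrak{P}_{1}$ converts the $p\equiv 9\pod{16}$ congruence into a local obstruction.
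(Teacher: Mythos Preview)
Your proof is correct and follows essentially the same line as the paper's: determine $t=2$ by checking that only the two primes above $(\sqrt{p})$ ramify, then show $\sqrt{\epsilon}$ is not a local norm there because its residue is a primitive eighth root of unity and $p\not\equiv 1\pmod{16}$. The one extra ingredient you supply is the explicit Hasse norm theorem plus product formula argument for $j\le 2^{t-1}$; the paper omits this, since once $j$ is shown to be even the integrality of $|C_L^G|=h_K\cdot 2/j$ with $h_K$ odd already forces $j=2$.
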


\begin{proof}
The only primes that can ramify are primes whose restriction to $k$ ramifies in $F$. In view of Lemma \ref{lemma9} the only possibilities are the 2 primes of $k(\sqrt{\epsilon})$ lying over $(\sqrt{p})$; it's easy to see that they both ramify in $F(\sqrt{\epsilon})$. Furthermore $\sqrt{\epsilon}$ is not a local norm at either of these primes. (Because the prime ramifies it suffices to show that the image of $\sqrt{\epsilon}$ in the residue class field is not a square. But Lemma \ref{lemma9} shows that this image is a primitive eighth root of unity in $(\newz/p)^{*}$.  And $p\not\equiv 1\pod{16}$. So in our quadratic extension, $t=2$ and $j$ is even. Since $k(\sqrt{\epsilon})$ has odd class number, Theorem \ref{theorem6} gives the desired result.
\end{proof}


\label{}



\end{document}